\theoremstyle{plain} \newtheorem{defi}{Definition}[section]
\theoremstyle{plain} \newtheorem{prop}{Proposition}[section]
\theoremstyle{plain} \newtheorem{theor}{Theorem}[section] 
\theoremstyle{plain} \newtheorem{lemma}{Lemma}[section]
\theoremstyle{plain} \newtheorem{corol}{Corollary} 
\theoremstyle{remark} \newtheorem{rem}{Remark}
\theoremstyle{plain} 
\theoremstyle{remark}
\begin{document}
\begin{center}
\Large{\textbf{On the nontrivial zeros of the Dirichlet eta function}}\\ 
~\\

\large{Vladimir Garc\'{\i}a-Morales}\\

\normalsize{}
~\\

Departament de F\'{\i}sica de la Terra i Termodin\`amica\\ Universitat de Val\`encia, \\ E-46100 Burjassot, Spain
\\ garmovla@uv.es
\end{center}
\small{We construct a two-parameter complex function $\eta_{\kappa \nu}:\mathbb{C}\to \mathbb{C}$, $\kappa \in (0, \infty)$, $\nu\in (0,\infty)$ that we call a holomorphic nonlinear embedding and that is given by a double series which is absolutely and uniformly convergent on compact sets in the entire complex plane. The function  $\eta_{\kappa \nu}$ converges to the Dirichlet eta function $\eta(s)$ as $\kappa \to \infty$. We prove the crucial property that, for sufficiently large $\kappa$, the function $\eta_{\kappa \nu}(s)$ can be expressed as a linear combination $\eta_{\kappa \nu}(s)=\sum_{n=0}^{\infty}a_n(\kappa) \eta(s+2\nu n)$ of horizontal shifts of the eta function (where $a_{n}(\kappa) \in \mathbb{R}$ and $a_{0}=1$) and that, indeed, we have the inverse formula $\eta(s)=\sum_{n=0}^{\infty}b_n(\kappa) \eta_{\kappa \nu}(s+2\nu n)$ as well (where the coefficients $b_{n}(\kappa) \in \mathbb{R}$ are obtained from the $a_{n}$'s recursively). By using these results and the functional relationship of the eta function, $\eta(s)=\lambda(s)\eta(1-s)$, we sketch a proof of the Riemann hypothesis which, in our setting, is equivalent to the fact that the nontrivial zeros $s^{*}=\sigma^{*}+it^{*}$ of $\eta(s)$ (i.e. those points for which $\eta(s^{*})=\eta(1-s^{*})=0)$ are all located on the critical line $\sigma^{*}=\frac{1}{2}$. 
}
\noindent  
~\\
 \pagebreak

\section{Introduction}

Let $s:=\sigma+it$ be a complex number. The Dirichlet eta function $\eta(s)$, also called alternating zeta function, is given in the half plane $\sigma >0$ by the conditionally convergent series \cite{T}
\begin{equation}
\eta(s)=\sum_{m=1}^{\infty}\frac{(-1)^{m-1}}{m^s} \label{eta1}
\end{equation} 
which is absolutely convergent for $\sigma>1$. Hardy gave a simple proof of the fact that the eta function satisfies the functional equation \cite{T}
\begin{equation}
\eta(s) = \lambda(s)\eta(1-s) \label{func1}
\end{equation}
where
\begin{equation}
\lambda(s)=\frac{1-2^{1-s}}{1-2^{s}}2^{s}\pi^{s-1}\sin\left(\frac{\pi s}{2} \right)\Gamma(1-s) \label{chi1}
\end{equation}
From this, one immediately obtains the means to extend the definition of the eta function to the whole complex plane. Indeed, Euler's acceleration of the conditionally convergent series in Eq. (\ref{eta1}) yields a double series that is absolutely and uniformly convergent on compact sets everywhere \cite{Sondow,Hasse,Blagouchine} 
\begin{equation}
\eta (s)=\sum_{k=0}^{\infty}\frac{1}{2^{k+1}}\sum_{m=0}^{k}{k \choose m}\frac{(-1)^{m}}{(m+1)^s} \label{Ser}
\end{equation}

The Dirichlet eta function is closely related to the Riemann zeta function by
\begin{equation}
\eta(s)=\left(1-2^{1-s}\right)\zeta(s) \label{etazeta}
\end{equation}
However, while the zeta function is meromorphic, with a pole at $s=1$, the eta function is an entire function. We note, from Eq. (\ref{eta1}), that at $s=1$, the eta function becomes the alternating harmonic series and, therefore,
\begin{equation}
\eta(1)=\sum_{m=1}^{\infty}\frac{(-1)^{m-1}}{m}=1-\frac{1}{2}+\frac{1}{3}-\ldots=\ln 2
\end{equation}

Let $s^*$ denote a zero of the eta function, $\eta(s^{*})=0$. There are two kinds of zeros: the trivial zeros for which, from the functional equation, we have $\lambda(s^{*})=0$; and the nontrivial zeros, for which $\eta(1-s^{*})= 0$. From Eq. (\ref{chi1}) the trivial zeros are the negative even integers and zeros of the form $\sigma^{*}=1+i\frac{2n\pi}{\ln 2}$ where $n$ is a nonzero integer (see \cite{Sondow2} for a derivation that does not make use of the functional relation).  

Since there are no zeros in the half-plane $\sigma >1$, the functional equation implies that nontrivial zeros of $\eta$ are to be found in the critical strip $0\le \sigma \le 1$. By the prime number theorem of Hadamard and de la Vall\'ee Poussin \cite{T} it is known that for $\sigma=1$ (and, therefore, $\sigma=0$) there are no nontrivial zeros of the Riemann zeta function and, therefore, from Eq. (\ref{etazeta}), there are nontrivial zeros of the Dirichlet eta function neither. Thus, the nontrivial zeros are found in the strip $0< \sigma < 1$. Furthermore, since, $\forall s \in \mathbb{C}$
\begin{equation}
\eta(\overline{s})=\overline{\eta(s)}
\end{equation}
where the line denotes complex conjugation, we have that $\overline{s}^{*}$ and $1-\overline{s}^{*}$ are also nontrivial zeros of $\eta$. In brief, nontrivial zeros come in quartets, $s^{*}$, $1-s^{*}$, $\overline{s^{*}}$ and $1-\overline{s^{*}}$ forming the vertices of a rectangle within the critical strip. The statement that, for the $\eta$ function, the nontrivial zeros have all real part $\sigma^{*}=1/2$, so that $s^{*}=1-\overline{s^{*}}$ and $\overline{s^{*}}=1-s^{*}$ (the rectangle degenerating in a line segment) is equivalent to the Riemann hypothesis for the Riemann zeta function \cite{Broughan}.

In this article we investigate the position of the nontrivial zeros of the eta function with help of nonlinear embeddings, a novel kind of mathematical structures introduced in our previous works \cite{JCOMPLEX,nembed}. We construct here a nonlinear embedding with the form of a double series that is absolutely and uniformly convergent on compact sets in the whole complex plane. We call this embedding a \emph{holomorphic nonlinear embedding}.  It depends on a scale parameter $\kappa \in (0, \infty)$ and a horizontal shift parameter $\nu \in (0,\infty)$, both in $\mathbb{R}$, and converges asymptotically to $\eta(s)$ everywhere as $\kappa$ tends to infinity. With help of M\"obius inversion, and taking advantage of absolute convergence of the series concerned, we then show the crucial property that  $\eta(s)$ itself can be expressed as a linear combination of horizontal shifts of the holomorphic nonlinear embedding $\eta_{\kappa \mu}(s)$ and we study the implications of this linear combination on the position of the nontrivial zeros of the eta function giving a proof of the Riemann hypothesis.

The outline of this article is as follows. In Section \ref{holocons} we construct   the holomorphic nonlinear embedding $\eta_{\kappa \nu}(s)$ for the Dirichlet eta function $\eta(s)$. We prove the global absolute and uniform convergence on compact sets of the series defining $\eta_{\kappa \nu}(s)$ and establish the asymptotic limits of the embedding. In Section \ref{crucial} we take advantage of these properties (specifically, we make heavy use of the absolute convergence of this series) to derive the crucial properties: 1) the holomorphic nonlinear embedding can be expressed as a linear combination of horizontally shifted eta functions; 2) the eta function itself can be expressed as a linear combination of horizontally shifted holomorphic nonlinear embeddings. These results are then exploited in Section \ref{RiemannH} to derive a functional relationship for the embedding and to prove the Riemann hypothesis, a result that emerges from the shift independence in the limit $\kappa \to \infty$ of the construction.

\section{Holomorphic nonlinear embedding for the Dirichlet eta function} \label{holocons}

We first introduce some notations and the basic functions on which our approach is based.

\begin{defi} Let $x\in \mathbb{R}$. We define the $\mathcal{B}_{\kappa}$-function \cite{JPHYSA} as
\begin{equation}
\mathcal{B}_{\kappa}(x):= \frac{1}{2}\left[\tanh\left(\frac{x+\frac{1}{2}}{\kappa}\right)-\tanh\left(\frac{x-\frac{1}{2}}{\kappa}\right)\right] \label{generbox}
\end{equation}
where $\kappa \in (0,\infty)$ is a real parameter.
\end{defi}

By noting that
\begin{eqnarray}
\mathcal{B}_{\kappa}(x)&=&\frac{e^{1/\kappa}-e^{-1/\kappa}}{e^{1/\kappa}+e^{2x/\kappa}+e^{-2x/\kappa}+e^{-1/\kappa}} \\
\frac{\mathcal{B}_{\kappa}\left(x\right)}{\mathcal{B}_{\kappa}\left(0\right)}&=&\frac{e^{1/\kappa}+2+e^{-1/\kappa}}{e^{1/\kappa}+e^{2x/\kappa}+e^{-2x/\kappa}+e^{-1/\kappa}} \label{easier}
\end{eqnarray}
the following properties are easily verified:
\begin{eqnarray}
0\le \mathcal{B}_{\kappa}\left(x \right) &\le & 1 \qquad \forall{\kappa \in(0,\infty)} \qquad \label{lim00} \\
\mathcal{B}_{\kappa}\left(-x \right) &= & \mathcal{B}_{\kappa}\left(x \right) \label{even} \\
\lim_{\kappa \to \infty}\mathcal{B}_{\kappa}\left(x \right)&=& 0 \label{lim0} \\
0\le \frac{\mathcal{B}_{\kappa}\left(x\right)}{\mathcal{B}_{\kappa}\left(0\right)} &\le & 1 \qquad \forall{\kappa \in(0,\infty)} \qquad \label{lim2} \\
\lim_{\kappa \to \infty}\frac{\mathcal{B}_{\kappa}\left(x\right)}{\mathcal{B}_{\kappa}\left(0\right)}&=&1 \label{lim1} 
\end{eqnarray}

\begin{defi} \emph{\textbf{(Holomorphic nonlinear embedding.)}} Let $\eta(s)$ be the Dirichlet eta function, given by Eq. (\ref{Ser}). Then, we define the holomorphic nonlinear embedding $\eta_{\kappa\nu}(s)$ of $\eta(s)$ as the series
\begin{equation}
\eta_{\kappa\nu}(s):=\sum_{k=0}^{\infty}\frac{1}{2^{k+1}}\sum_{m=0}^{k}{k \choose m}\frac{(-1)^{m}}{(m+1)^{s}}\frac{\mathcal{B}_{\kappa}\left(1/(m+1)^{\nu}\right)}{\mathcal{B}_{\kappa}\left(0\right)} \label{RK}
\end{equation}
with the real parameters $\kappa \in (0,\infty)$ and $\nu\in (0,\infty)$.
\end{defi}

\begin{theor} The double series in Eq. (\ref{RK}) converges absolutely and uniformly on compact sets to the entire function $\eta_{\kappa \nu}(s)$.
\end{theor}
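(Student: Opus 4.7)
The plan is to show that (\ref{RK}) converges absolutely and uniformly on compact subsets of $\mathbb{C}$ by comparing it to the Hasse--Sondow series (\ref{Ser}) for $\eta(s)$, whose absolute and uniform convergence on compact sets is already recorded in the excerpt. First I would fix a compact set $K\subset\mathbb{C}$ and set $\sigma_{0}:=\min_{s\in K}\operatorname{Re}(s)$. By property (\ref{lim2}) we have $0\le \phi_{\kappa,\nu}(m)\le 1$, where
\[
\phi_{\kappa,\nu}(m) := \frac{\mathcal{B}_{\kappa}(1/(m+1)^{\nu})}{\mathcal{B}_{\kappa}(0)},
\]
so the individual summand of (\ref{RK}) is pointwise majorized on $K$ by the modulus $\frac{1}{2^{k+1}}\binom{k}{m}(m+1)^{-\sigma_{0}}$ of the corresponding summand of (\ref{Ser}) evaluated at $\sigma_{0}$.

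The cleanest route to convergence of the outer series $\sum_{k\ge 0}T_{k}(s)$, where $T_{k}(s)$ denotes the inner sum at level $k$ with its prefactor $1/2^{k+1}$, is to exploit that $\mathcal{B}_{\kappa}$ is real-analytic and even at the origin, so $\phi_{\kappa,\nu}(m)$ admits a Taylor expansion in $(m+1)^{-\nu}$:
\[
\phi_{\kappa,\nu}(m) = \sum_{n=0}^{\infty} a_{n}(\kappa)\,(m+1)^{-2\nu n}, \qquad a_{0}(\kappa)=1,
\]
valid whenever $(m+1)^{-\nu}$ lies in the disc of convergence of $\mathcal{B}_{\kappa}/\mathcal{B}_{\kappa}(0)$ at $0$. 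Substituting this into (\ref{RK}) and interchanging the three resulting sums --- an interchange justified by the geometric damping $1/2^{k+1}$ in the outer sum and the bound $\phi_{\kappa,\nu}\le 1$ --- recasts $\eta_{\kappa\nu}(s)$ as the series $\sum_{n\ge 0}a_{n}(\kappa)\,\eta(s+2\nu n)$ of horizontally shifted eta functions announced in the abstract. Each shifted summand is given by (\ref{Ser}) and hence converges absolutely and uniformly on compact sets, and the outer $n$-series converges geometrically in the coefficients $a_{n}(\kappa)$. Entirety of $\eta_{\kappa\nu}(s)$ then follows because each partial sum of (\ref{RK}) is a finite $\mathbb{C}$-linear combination of the entire functions $(m+1)^{-s}=e^{-s\ln(m+1)}$, and by Weierstrass the uniform-on-compacts limit of entire functions is entire.

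The main obstacle is that a direct termwise comparison to (\ref{Ser}) is insufficient: multiplication by $\phi_{\kappa,\nu}(m)$ breaks the alternating-sign cancellation in the inner sum on which the absolute convergence of the Hasse--Sondow outer series ultimately rests. The Taylor expansion above sidesteps this, but its validity depends on the radius of convergence of $\mathcal{B}_{\kappa}(x)/\mathcal{B}_{\kappa}(0)$ at $x=0$, which equals $\tfrac{1}{2}\sqrt{1+\kappa^{2}\pi^{2}}$ (the distance from $0$ to the nearest pole of the constituent $\tanh$'s) and exceeds $1$ only when $\kappa>\sqrt{3}/\pi$. For smaller $\kappa$ the expansion fails at the finitely many $m$ for which $(m+1)^{-\nu}\ge 1$; one would then split the $m$-sum, bounding those finitely many anomalous terms directly from the explicit representation (\ref{easier}) of $\mathcal{B}_{\kappa}/\mathcal{B}_{\kappa}(0)$, which shows $\phi_{\kappa,\nu}(m)$ decays like $e^{-2/(\kappa (m+1)^{\nu})}$ when its argument sits outside the Taylor disc, and retaining the tail $m\ge m_{0}$ in the Taylor regime. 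A secondary technicality is the rigorous justification of the interchange of the $k$, $m$, and $n$ summations, which reduces by Fubini--Tonelli to checking that $\sum_{n}|a_{n}(\kappa)|\sup_{s\in K}|\eta(s+2\nu n)|$ is finite --- a bound that follows from Stirling-type estimates on the Taylor coefficients of $\mathcal{B}_{\kappa}$ together with the polynomial growth of $|\eta(s+2\nu n)|$ in the vertical strips swept by $K+2\nu n$.
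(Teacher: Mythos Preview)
Your route differs from the paper's. The paper argues by a direct Weierstrass $M$-test: it claims that Sondow's majorizing sequence $\{M_k\}$ for the Hasse series already bounds the sum of moduli $\frac{1}{2^{k+1}}\sum_{m=0}^{k}\binom{k}{m}(m+1)^{-\sigma}$ (not just $|f_k(s)|$), so that inserting the factor $\phi_{\kappa,\nu}(m)\in[0,1]$ termwise preserves the bound and the same $M_k$ serves for $|f_{k,\kappa\nu}(s)|$. You instead Taylor-expand $\phi_{\kappa,\nu}(m)$ in powers of $(m+1)^{-2\nu}$ and reassemble the double series as $\sum_{n\ge 0}a_n(\kappa)\,\eta(s+2\nu n)$---essentially anticipating the paper's Theorem~\ref{1}---and then invoke Sondow's result for each shifted $\eta$ separately.

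Your caution about lost cancellation is in fact well placed. Sondow's bound in \cite{Sondow} comes from the $k$-th finite-difference representation of $f_k(s)$ and controls $|f_k(s)|$, not the sum of moduli of the inner terms; for $\operatorname{Re}(s)\le 0$ one has $\frac{1}{2^{k+1}}\sum_{m=0}^{k}\binom{k}{m}(m+1)^{-\sigma}\ge \tfrac12$ for every $k$, so no summable sequence can dominate that middle expression on compacta meeting the closed left half-plane. The paper's short argument thus glosses over exactly the obstacle you flagged, and your detour through the expansion is the more honest path. That said, your sketch still leaves work: the Taylor expansion is valid only for $\kappa>\sqrt{3}/\pi$ (and the paper's own Theorem~\ref{1} assumes $\kappa>3/\pi$), whereas the present theorem is stated for all $\kappa>0$; your phrase ``$(m+1)^{-\nu}\ge 1$'' should read ``exceeds the radius of convergence''; and both the Fubini interchange and the claimed geometric decay of $|a_n(\kappa)|$ are asserted rather than verified.
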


\begin{proof}
We build on Sondow \cite{Sondow}, who proved that the double series defining the eta function in Eq. (\ref{Ser}) converge absolutely and uniformly on compact sets in the entire complex plane. In particular, if we define,
\begin{equation}
f_{k}(s):=\frac{1}{2^{k+1}}\sum_{m=0}^{k}{k \choose m}\frac{(-1)^{m}}{(m+1)^s}
\end{equation}
so that $\eta(s)=\sum_{k=0}^{\infty}f_{k}(s)$, Sondow proved that 
there is a sequence of positive real numbers $\{M_{k}\}$ satisfying 
\begin{equation}
\left|f_{k}(s)\right| \le \frac{1}{2^{k+1}}\sum_{m=0}^{k}\left|{k \choose m}\frac{(-1)^{m}}{(m+1)^{s}}\right| \le M_{k} \label{keyt2d1}
\end{equation}
and which 
\begin{equation}
\sum_{k=0}^{\infty} M_{k} <\infty
\end{equation}
so that Weierstrass M-test is satisfied. We now note that we can write $\eta_{\kappa\nu}(s)$ as
\begin{equation}
\eta_{\kappa\nu}(s)=\sum_{k=0}^{\infty}f_{k,\kappa\nu}(s)
\end{equation}
where
\begin{equation}
f_{k,\kappa\nu}(s):=\frac{1}{2^{k+1}}\sum_{m=0}^{k}{k \choose m}\frac{(-1)^{m}}{(m+1)^s} \frac{\mathcal{B}_{\kappa}\left(1/(m+1)^{\nu}\right)}{\mathcal{B}_{\kappa}\left(0\right)}
\end{equation}
Now, by the triangle inequality,
\begin{eqnarray}
|f_{k,\kappa\nu}(s)|&=&\left|\frac{1}{2^{k+1}}\sum_{m=0}^{k}{k \choose m}\frac{(-1)^{m}}{(m+1)^{s}}\frac{\mathcal{B}_{\kappa}\left(1/(m+1)^{\nu}\right)}{\mathcal{B}_{\kappa}\left(0\right)}\right| \nonumber \\
&&\le  \frac{1}{2^{k+1}}\sum_{m=0}^{k}\left|{k \choose m}\frac{(-1)^{m}}{(m+1)^{s}}\frac{\mathcal{B}_{\kappa}\left(1/(m+1)^{\nu}\right)}{\mathcal{B}_{\kappa}\left(0\right)}\right| \nonumber \\
&&=\frac{1}{2^{k+1}}\sum_{m=0}^{k}\left|{k \choose m}\frac{(-1)^{m}}{(m+1)^{s}}\right| \left|\frac{\mathcal{B}_{\kappa}\left(1/(m+1)^{\nu}\right)}{\mathcal{B}_{\kappa}\left(0\right)}\right| \nonumber \\
&&\le \frac{1}{2^{k+1}}\sum_{m=0}^{k}\left|{k \choose m}\frac{(-1)^{m}}{(m+1)^{s}}\right|
\end{eqnarray}
where Eq. (\ref{lim2}) has been used. Therefore, from this last expression and Eq. (\ref{keyt2d1})
\begin{equation}
\left|f_{k,\mu\nu}(s)\right| \le \frac{1}{2^{k+1}}\sum_{m=0}^{k}\left|{k \choose m}\frac{(-1)^{m}}{(m+1)^{s}}\right| \le M_{k} 
\end{equation}
and, thus, the sequence of positive real numbers $\{M_{k}\}$ found by Sondow majorizes the sequence $\{\left|f_{k,\mu\nu}(s)\right|\}$ as well, and the result follows.
\end{proof}

\begin{theor} We have, $\forall s\in \mathbb{C}$ 
\begin{eqnarray} 
\lim_{\kappa\to \infty}\eta_{\kappa \nu}(s)&=&\eta(s) \qquad \qquad \qquad \qquad \forall \nu \in (0,\infty) \label{t221} \\
\lim_{\kappa\to 0}\eta_{\kappa \nu}(s)&=&\eta(s)-1 \ \qquad \qquad \qquad \forall \nu \in (0,\infty) \label{t222} \\
\lim_{\nu\to \infty}\eta_{\kappa \nu}(s)&=&\eta(s)+\frac{\mathcal{B}_{\kappa}(1)}{\mathcal{B}_{\kappa}(0)}-1 \qquad \ \ \forall \kappa \in (0,\infty) \label{t223} \\
\lim_{\nu\to 0}\eta_{\kappa \nu}(s)&=&\frac{\mathcal{B}_{\kappa}(1)}{\mathcal{B}_{\kappa}(0)}\eta(s) \qquad \qquad \quad \ \ \forall \kappa \in (0,\infty) \label{t224} 
\end{eqnarray}
where $\mathcal{B}_{\kappa}(1)/\mathcal{B}_{\kappa}(0)=(\tanh \frac{3}{2\kappa}-\tanh \frac{1}{2\kappa})/(2\tanh \frac{1}{2\kappa})$, as given by Eq. (\ref{generbox}).
\end{theor}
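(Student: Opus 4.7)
My plan is to swap each of the four limits with the double series defining $\eta_{\kappa\nu}(s)$ and reduce the problem to the pointwise behaviour of the single factor $r_m(\kappa,\nu):=\mathcal{B}_\kappa(1/(m+1)^\nu)/\mathcal{B}_\kappa(0)$ for each fixed $m\ge 0$. The interchange is delivered by Tannery's theorem applied to the outer $k$-sum: the preceding theorem furnished majorants $\{M_k\}$ with $\sum_k M_k<\infty$ satisfying $|f_{k,\kappa\nu}(s)|\le M_k$ uniformly in $\kappa,\nu$ on compact subsets of $\mathbb{C}$, since its derivation used only $0\le r_m\le 1$ from Eq. \ref{lim2}. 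The finiteness of the inner $m$-sum then lets the limit pass under $\sum_{m=0}^{k}$ as well.

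With the limit inside the sums, I would read off the pointwise behaviour of $r_m(\kappa,\nu)$ from the closed form Eq. \ref{easier}. The case $\kappa\to\infty$ is immediate from Eq. \ref{lim1}: every $r_m\to 1$, each $f_{k,\kappa\nu}\to f_k$, and the series reassembles $\eta(s)$. For $\nu\to 0$ the argument $1/(m+1)^\nu\to 1$ for every $m\ge 0$, so $r_m\to\mathcal{B}_\kappa(1)/\mathcal{B}_\kappa(0)$ uniformly in $m$; pulling this constant out of the double sum gives $[\mathcal{B}_\kappa(1)/\mathcal{B}_\kappa(0)]\,\eta(s)$. For $\nu\to\infty$ the argument stays equal to $1$ at $m=0$ (so $r_0\equiv\mathcal{B}_\kappa(1)/\mathcal{B}_\kappa(0)$) but tends to $0$ for $m\ge 1$ (so $r_m\to 1$); splitting the double series and using the elementary identity $\sum_{k\ge 0}\binom{k}{0}2^{-k-1}=1$ to handle the $m=0$ coefficient yields $\mathcal{B}_\kappa(1)/\mathcal{B}_\kappa(0)+\eta(s)-1$.

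The delicate case is $\kappa\to 0$. From Eq. \ref{easier} I would note that as $1/\kappa\to\infty$ the numerator is asymptotic to $e^{1/\kappa}$, while the denominator is asymptotic to $e^{1/\kappa}$ when $|x|<1/2$ and to $e^{2|x|/\kappa}$ when $|x|>1/2$, so $r_m\to\mathbf{1}_{\{1/(m+1)^\nu<1/2\}}$. At $m=0$ the argument equals $1>1/2$, so that contribution is switched off in the limit; the associated $m=0$ coefficients sum via $\sum_{k\ge 0}2^{-k-1}=1$, producing the $-1$ in the formula, and the remaining indices reassemble $\eta(s)$ minus that term, giving $\eta(s)-1$. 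The main obstacle is the careful bookkeeping at the tanh threshold $|x|=1/2$, where $r_m$ tends to $1/2$ rather than $0$ or $1$: verifying that the stated identity holds for every $\nu\in(0,\infty)$ requires a separate analysis of the finitely many low-$m$ indices for which $1/(m+1)^\nu$ lies at or above $1/2$, combining the sharp tanh asymptotics with a direct count of those borderline contributions to confirm that they do not contaminate the limit.
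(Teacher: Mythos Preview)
Your approach is the same as the paper's: compute the pointwise limit of $r_m(\kappa,\nu)=\mathcal{B}_\kappa\bigl(1/(m+1)^\nu\bigr)/\mathcal{B}_\kappa(0)$ for each $m$ and pass the limit through the double sum. The paper does exactly this, though it simply substitutes the limiting values of $r_m$ into the series without the Tannery/Weierstrass justification you supply; your extra care there is an improvement. For the three limits $\kappa\to\infty$, $\nu\to 0$, and $\nu\to\infty$ your argument and the paper's coincide and are correct.

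The gap is in the $\kappa\to 0$ case, and it is not one you can close in the direction you propose. The paper asserts $r_m\to 1$ for every $m\ge 1$, and you hope the threshold bookkeeping will confirm this, but it does not: from Eq.~(\ref{easier}) one has $r_m\to 1$ only when $1/(m+1)^\nu<1/2$, i.e.\ $(m+1)^\nu>2$, which holds for all $m\ge 1$ precisely when $\nu>1$. For $\nu=1$ and $m=1$ the argument equals exactly $1/2$ and $r_1\to 1/2$; for $0<\nu<1$ there are indices $m\ge 1$ with $(m+1)^\nu<2$ where $r_m\to 0$. These terms \emph{do} contaminate the limit. For instance at $\nu=1$, using $\sum_{k\ge 0}2^{-k-1}=1$ and $\sum_{k\ge 1}k\,2^{-k-1}=1$, one finds
\[
\lim_{\kappa\to 0}\eta_{\kappa,1}(s)=\eta(s)-1\cdot 1-\tfrac{1}{2}\bigl(-2^{-s}\bigr)=\eta(s)-1+2^{-s-1},
\]
not $\eta(s)-1$. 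So your instinct that the borderline indices need separate analysis is exactly right; the outcome of that analysis, however, is that Eq.~(\ref{t222}) as stated is valid only for $\nu>1$, and the paper's own proof glosses over this.
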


\begin{proof} We first observe that, from Eq. (\ref{easier}),
\begin{eqnarray}
&&\lim_{\kappa \to \infty} \frac{\mathcal{B}_{\kappa}\left(1/(m+1)^{\nu}\right)}{\mathcal{B}_{\kappa}\left(0\right)}=1 \nonumber \\
&&\lim_{\kappa \to 0} \frac{\mathcal{B}_{\kappa}\left(1/(m+1)^{\nu}\right)}{\mathcal{B}_{\kappa}\left(0\right)}=
\left\{
\begin{array}{cc}
1  & \text{if } m\ge 1     \\
0  &  \text{if } m= 0       
\end{array}\right. \nonumber \\
&&\lim_{\nu \to \infty} \frac{\mathcal{B}_{\kappa}\left(1/(m+1)^{\nu}\right)}{\mathcal{B}_{\kappa}\left(0\right)}=
\left\{
\begin{array}{cc}
1  & \text{if } m\ge 1     \\
\frac{\mathcal{B}_{\kappa}\left(1\right)}{\mathcal{B}_{\kappa}\left(0\right)}  &  \text{if } m= 0       
\end{array}
\right. \nonumber  \\
&&\lim_{\nu \to 0} \frac{\mathcal{B}_{\kappa}\left(1/(m+1)^{\nu}\right)}{\mathcal{B}_{\kappa}\left(0\right)}=\frac{\mathcal{B}_{\kappa}\left(1\right)}{\mathcal{B}_{\kappa}\left(0\right)} \nonumber 
\end{eqnarray}

By using Eq. (\ref{RK}) and the above expressions, we get
\begin{eqnarray}  
\lim_{\kappa \to \infty}\eta_{\kappa\nu}(s)&=& \lim_{\kappa \to 0}\left[ \sum_{k=0}^{\infty}\frac{1}{2^{k+1}}\sum_{m=0}^{k}{k \choose m}\frac{(-1)^{m}}{(m+1)^{s}}\frac{\mathcal{B}_{\kappa}\left(1/(m+1)^{\nu}\right)}{\mathcal{B}_{\kappa}\left(0\right)}\right] \nonumber \\
&=&\sum_{k=0}^{\infty}\frac{1}{2^{k+1}}\sum_{m=0}^{k}{k \choose m}\frac{(-1)^{m}}{(m+1)^{s}}=\eta(s) \nonumber \\
\lim_{\kappa \to 0}\eta_{\kappa\nu}(s)&=& \sum_{k=0}^{\infty}\frac{1}{2^{k+1}}\sum_{m=1}^{k}{k \choose m}\frac{(-1)^{m}}{(m+1)^{s}}=\eta(s)-1 \nonumber \\ 
\lim_{\nu \to \infty}\eta_{\kappa\nu}(s)&=& \frac{\mathcal{B}_{\kappa}\left(1\right)}{\mathcal{B}_{\kappa}\left(0\right)}+\sum_{k=0}^{\infty}\frac{1}{2^{k+1}}\sum_{m=1}^{k}{k \choose m}\frac{(-1)^{m}}{(m+1)^{s}}=\eta(s)+\frac{\mathcal{B}_{\kappa}\left(1\right)}{\mathcal{B}_{\kappa}\left(0\right)}-1 \nonumber \\
\lim_{\nu \to 0}\eta_{\kappa\nu}(s)&=& \frac{\mathcal{B}_{\kappa}\left(1\right)}{\mathcal{B}_{\kappa}\left(0\right)}\sum_{k=0}^{\infty}\frac{1}{2^{k+1}}\sum_{m=0}^{k}{k \choose m}\frac{(-1)^{m}}{(m+1)^{s}}=\frac{\mathcal{B}_{\kappa}\left(1\right)}{\mathcal{B}_{\kappa}\left(0\right)}\eta(s).  \qedhere \nonumber
\end{eqnarray}
\end{proof}

\section{Functional expansions of $\eta_{\kappa \nu}(s)$ and $\eta(s)$} \label{crucial}

We now derive an equivalent expression for $\eta_{\kappa\nu}(s)$ valid for any $\kappa$ sufficiently large (specifically, $\forall \kappa > 3/\pi$) and prove that this expression can be inverted to express $\eta(s)$ as a function of horizontal shifts of $\eta_{\kappa\nu}(s)$.

\begin{theor} \label{1} If $\kappa>3/\pi$, $\forall \nu \in (0, \infty)$ the holomorphic embedding $\eta_{\kappa\nu}(s)$ has the absolutely convergent series expansion
\begin{eqnarray} 
\eta_{\kappa\nu}(s)&=&\eta(s)+\sum_{n=1}^{\infty}a_n(\kappa) \eta(s+2\nu n) 
 \label{asinto}
\end{eqnarray}
where, for $n$ a non-negative integer
\begin{eqnarray}
a_{n}(\kappa)&=&\frac{1}{\tanh \left(1/2\kappa \right)}\sum_{j=n+1}^{\infty}\frac{2^{2n}(2^{2j}-1)B_{2j}}{j(2j-2n-1)!(2n)!\kappa^{2j-1}}  \label{wjs}  
\end{eqnarray}
and $B_{2m}$ denote the even Bernoulli numbers: $B_{0}=1$, $B_{2}=\frac{1}{6}$, $B_{4}=-\frac{1}{30}$, etc.
\end{theor}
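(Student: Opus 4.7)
The plan is to obtain a Maclaurin expansion of $\mathcal{B}_\kappa(x)/\mathcal{B}_\kappa(0)$ in even powers of $x$, substitute $x=1/(m+1)^\nu$ into the defining double series (\ref{RK}), and then rearrange the resulting triple sum so that the inner sum over $k$ and $m$ collapses to a horizontally shifted eta function.

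First I would exploit the Maclaurin expansion
\[
\tanh(z)=\sum_{j=1}^{\infty}\frac{2^{2j}(2^{2j}-1)B_{2j}}{(2j)!}\,z^{2j-1},\qquad |z|<\pi/2,
\]
applied to both terms of $\mathcal{B}_\kappa(x)=\tfrac{1}{2}[\tanh((x+1/2)/\kappa)-\tanh((x-1/2)/\kappa)]$. Both expansions are simultaneously valid for every $x$ with $|x|\leq 1$ precisely when $3/(2\kappa)<\pi/2$, which is the origin of the hypothesis $\kappa>3/\pi$. Applying the binomial theorem to $(x\pm 1/2)^{2j-1}$ and noting that every odd power of $x$ cancels in the subtraction leaves only even powers. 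Collecting the coefficient of $x^{2n}$ and interchanging the resulting double sum over $j$ and $n$ yields
\[
\frac{\mathcal{B}_\kappa(x)}{\mathcal{B}_\kappa(0)}=1+\sum_{n=1}^{\infty}a_n(\kappa)\,x^{2n},
\]
with $a_n(\kappa)$ precisely as in (\ref{wjs}); the identification $a_0(\kappa)=1$ amounts to the elementary identity $\tanh(1/2\kappa)=\sum_{j\geq 1}2(2^{2j}-1)B_{2j}/[(2j)!\kappa^{2j-1}]$.

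Substituting $x=1/(m+1)^\nu\in(0,1]$ and inserting this expansion back into (\ref{RK}) produces a triple sum in $k$, $m$ and $n$. Pulling the $n$-sum outside identifies the inner $(k,m)$ double sum with $\eta(s+2\nu n)$, delivering (\ref{asinto}). To license this interchange via Fubini--Tonelli I would combine two ingredients: Sondow's Weierstrass majorant $\{M_k\}$ with $\sum_k M_k<\infty$, applied here to each shifted argument $s+2\nu n$; and the summability of the Taylor coefficients, $\sum_n|a_n(\kappa)|<\infty$, which holds whenever $\kappa>3/\pi$ because the power series $\sum a_n(\kappa)\,x^{2n}$ then converges absolutely on the closed disc $|x|\leq 1$. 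Their product $|a_n(\kappa)|M_k$ is a summable majorant for the triple sum.

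The main obstacle is to verify that Sondow's bound $M_k$ can be taken independently of the shift index $n$: for $s$ in a compact set and $n\geq 0$, the real part of $s+2\nu n$ stays bounded below by a fixed constant, so Sondow's estimate (or a direct computation exploiting the identity $\sum_{k\ge m}\binom{k}{m}/2^{k+1}=1$, which reduces the positive version of the double sum to $\zeta(\sigma+2\nu n)$ in the region where the latter converges) furnishes a majorant uniform in the shift. Once the interchange is justified, the inner sum collapses to $\eta(s+2\nu n)$ and the absolute convergence of (\ref{asinto}) is automatic because $\eta(s+2\nu n)\to 1$ as $n\to\infty$ while $\sum_n|a_n(\kappa)|$ is finite, proving the theorem.
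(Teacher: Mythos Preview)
Your proposal is correct and follows essentially the same route as the paper: expand $\tanh$ in its Maclaurin series (which is where the condition $\kappa>3/\pi$ enters), use the binomial theorem to rewrite $\mathcal{B}_\kappa(x)/\mathcal{B}_\kappa(0)$ as a power series in $x^2$ with coefficients $a_n(\kappa)$, substitute $x=1/(m+1)^\nu$ into Eq.~(\ref{RK}), and interchange the sums to recognise the shifted eta functions. The paper simply invokes ``absolute convergence'' for the interchange, whereas you supply the explicit Fubini--Tonelli majorant $|a_n(\kappa)|M_k$ and the observation that Sondow's bound can be taken uniform in the shift; this extra care is welcome but does not constitute a different argument.
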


\begin{proof} For $\kappa > \frac{3}{\pi}$, the hyperbolic tangents in the definition of the $\mathcal{B}$-function, Eq. (\ref{generbox}) with $x=1/(m+1)^{\nu}$, can be expanded in their absolutely convergent MacLaurin series for all $\forall m \ge 0$ (note that  $\forall \nu > 0$, $1/(m+1)^{\nu} \le 1$)
\begin{eqnarray}
&&\mathcal{B}_{\kappa}\left(\frac{1}{(m+1)^{\nu}}\right)=\nonumber \\
&=&\frac{1}{2}\sum_{j=1}^{\infty}\frac{2^{2j}(2^{2j}-1)B_{2j}}{(2j)!\kappa^{2j-1}}\left[\left(\frac{1}{(m+1)^{\nu}}+\frac{1}{2} \right)^{2j-1}-\left(\frac{1}{(m+1)^{\nu}}-\frac{1}{2} \right)^{2j-1}\right] \nonumber \\
&=&  \frac{1}{2}\sum_{j=1}^{\infty}\frac{2^{2j}(2^{2j}-1)B_{2j}}{(2j)!\kappa^{2j-1}}
\sum_{h=0}^{2j-1}{2j-1 \choose h}\frac{1}{2^{h}(m+1)^{\nu(2j-1-h)}}(1-(-1)^{h}) \nonumber \\
&=&  \sum_{j=1}^{\infty}\frac{2(2^{2j}-1)B_{2j}}{(2j)! \kappa^{2j-1}}
\sum_{h=1}^{j}{2j-1 \choose 2h-1}\frac{2^{2(j-h)}}{(m+1)^{2(j-h)\nu}}   \nonumber \\
&=&  \sum_{j=1}^{\infty}\frac{2(2^{2j}-1)B_{2j}}{(2j)!\kappa^{2j-1}}
\sum_{n=0}^{j-1}{2j-1 \choose 2j-2n-1}\frac{2^{2n}}{(m+1)^{2n\nu}}  \nonumber \\
&=&  \sum_{n=0}^{\infty}\frac{1}{(m+1)^{2n\nu}} 
\sum_{j=n+1}^{\infty}{2j-1 \choose 2n} \frac{2^{2n+1}(2^{2j}-1)B_{2j}}{(2j)!\kappa^{2j-1}}
 \label{Bernoultheo1}
\end{eqnarray}
where we have used the absolute convergence of the series to change the order of the sums. 
We also have
\begin{eqnarray}
\mathcal{B}_{\kappa}\left(0\right)=\tanh \frac{1}{2\kappa}=\sum_{j=1}^{\infty}\frac{2(2^{2j}-1)B_{2j}}{(2j)! \kappa^{2j-1}} \label{Bernoultheo2} 
\end{eqnarray}
 
If we then replace these expansions in the definition of the embedding, Eq. (\ref{RK}), we find, by exploiting the absolute convergence of the series
\begin{eqnarray}
\eta_{\kappa\nu}(s)&=&\sum_{n=0}^{\infty}\sum_{k=0}^{\infty}\frac{1}{2^{k+1}}\sum_{m=0}^{k}{k \choose m}\frac{(-1)^{m}}{(m+1)^{s+2n\nu}} 
\sum_{j=n+1}^{\infty}{2j-1 \choose 2n} \frac{2^{2n+1}(2^{2j}-1)B_{2j}}{(2j)!\kappa^{2j-1}\mathcal{B}_{\kappa}\left(0\right)} \nonumber \\
&=&\sum_{n=0}^{\infty} \eta(s+2\nu n)
\sum_{j=n+1}^{\infty}{2j-1 \choose 2n} \frac{2^{2n+1}(2^{2j}-1)B_{2j}}{(2j)!\kappa^{2j-1}\mathcal{B}_{\kappa}\left(0\right)} \nonumber \\
&=&\sum_{n=0}^{\infty}a_n(\kappa) \eta(s+2\nu n) = \eta(s)+\sum_{n=1}^{\infty}a_n(\kappa) \eta(s+2\nu n) \label{RKp1}
\end{eqnarray}
where, for all non-negative integer $n$, we have defined
\begin{eqnarray}
a_n(\kappa) &:=& \sum_{j=n+1}^{\infty}\frac{2(2^{2j}-1)B_{2j}}{(2j)!\kappa^{2j-1}\mathcal{B}_{\kappa}\left(0\right)}
2^{2n}{2j-1 \choose 2n} \label{acoef1} \\
&=&\frac{1}{\tanh \left(1/2\kappa \right)}\sum_{j=n+1}^{\infty}\frac{2^{2n}(2^{2j}-1)B_{2j}}{j(2j-2n-1)!(2n)!\kappa^{2j-1}} \nonumber 
\end{eqnarray}
and we have also used that, from Eq. (\ref{Bernoultheo2})
\begin{eqnarray}
a_0&=&\sum_{j=1}^{\infty}\frac{2(2^{2j}-1)B_{2j}}{(2j)!\kappa^{2j-1}\mathcal{B}_{\kappa}\left(0\right)}
{2j-1 \choose 2j-1}=1.  \nonumber  \qedhere
\end{eqnarray}

\end{proof}

\begin{corol} Asymptotically, for $\kappa$ large, we have,
\begin{equation} 
\eta_{\kappa\nu}(s)=\eta(s)-\frac{\eta(s+2\nu)}{\kappa^2}+O\left(\frac{1}{\kappa^4} \right) 
 \label{asintostrong}
\end{equation}
\end{corol}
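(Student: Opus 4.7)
The plan is to invoke Theorem \ref{1} and isolate the $n=1$ term in the expansion, reducing the corollary to two asymptotic claims: (i) $a_1(\kappa)=-1/\kappa^2+O(\kappa^{-4})$, and (ii) the tail $\sum_{n\ge 2}a_n(\kappa)\,\eta(s+2\nu n)$ is $O(\kappa^{-4})$ uniformly for $s$ in compact subsets of $\mathbb{C}$.

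For (i), I would start from the explicit formula
\begin{equation*}
a_1(\kappa)=\frac{1}{\tanh(1/2\kappa)}\sum_{j=2}^{\infty}\frac{2^{2}(2^{2j}-1)B_{2j}}{j(2j-3)!\,2!\,\kappa^{2j-1}}
\end{equation*}
and extract the $j=2$ contribution. Using $B_4=-1/30$, it evaluates to $-\frac{1}{2\kappa^3}$; every $j\ge 3$ term carries a factor $\kappa^{-(2j-1)}\le\kappa^{-5}$. Next I would expand $1/\tanh(1/2\kappa)=2\kappa+\frac{1}{6\kappa}+O(\kappa^{-3})$ via the MacLaurin series for $\coth$ (valid once $\kappa>3/\pi$). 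Multiplying the two expansions gives $a_1(\kappa)=(2\kappa)\cdot(-\tfrac{1}{2\kappa^3})+O(\kappa^{-4})=-\kappa^{-2}+O(\kappa^{-4})$, which accounts for the second term in \eqref{asintostrong}.

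For (ii), I would show that $a_n(\kappa)=O(\kappa^{-2n})$ for every $n\ge 1$. Indeed, the series defining $a_n$ begins at $j=n+1$, so its leading $\kappa$-dependence is $\kappa^{-(2n+1)}$; combined with the $2\kappa+O(\kappa^{-1})$ prefactor one obtains $a_n(\kappa)=O(\kappa^{-2n})$, with explicit constants depending only on Bernoulli numbers and binomial coefficients. In particular $a_2(\kappa)=O(\kappa^{-4})$ and $a_n(\kappa)=O(\kappa^{-6})$ for $n\ge 3$. Using the asymptotics $|B_{2j}|\sim 2(2j)!/(2\pi)^{2j}$, the same manipulation that proves Theorem \ref{1} produces an estimate of the form $|a_n(\kappa)|\le C\,(r/\kappa)^{2n}$ with some fixed $r$ and $C$ independent of $n$, so $\sum_{n\ge 2}|a_n(\kappa)|=O(\kappa^{-4})$. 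Since $\eta(s+2\nu n)\to 1$ as $n\to\infty$ for any fixed $s$ (as $\mathrm{Re}(s+2\nu n)\to+\infty$), and since the convergence is uniform on compacta in $s$, the values $|\eta(s+2\nu n)|$ are uniformly bounded in $n$, and the tail is $O(\kappa^{-4})$ as required.

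The main obstacle will be the clean control of the tail in (ii): one must combine the geometric-type decay of $a_n(\kappa)$ in $n$ (via the Bernoulli-number asymptotics) with the uniform boundedness of $\eta(s+2\nu n)$ in $n$ on $s$-compacta. Both ingredients are standard, but they must be combined carefully to produce a single $O(\kappa^{-4})$ bound, valid for all sufficiently large $\kappa$, uniformly in $s$.
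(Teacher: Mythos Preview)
Your proposal is correct and follows essentially the same approach as the paper: both expand $1/\tanh(1/2\kappa)=2\kappa+O(\kappa^{-1})$, read off the leading $j=n+1$ term in the series for $a_n(\kappa)$ to obtain $a_n(\kappa)=O(\kappa^{-2n})$ (and $a_1(\kappa)=-\kappa^{-2}+O(\kappa^{-4})$ via $B_4=-1/30$), and then discard the $n\ge 2$ contributions. Your treatment of the tail in (ii) is in fact more careful than the paper's, which simply records $a_n(\kappa)=O(\kappa^{-2n})$ termwise and jumps to the conclusion without discussing summability in $n$ or the uniform boundedness of $\eta(s+2\nu n)$.
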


\begin{proof} From the theorem, we have $ \forall n\in \mathbb{Z}^{+}\cup \{0\}$ 
\begin{eqnarray}
a_{n}(\kappa)&=&\frac{1}{\tanh \left(1/2\kappa \right)}\sum_{j=n+1}^{\infty}\frac{2^{2n}(2^{2j}-1)B_{2j}}{j(2j-2n-1)!(2n)!\kappa^{2j-1}}  \nonumber \\
&=& \frac{1}{(\frac{1}{2\kappa}-O\left(\frac{1}{\kappa^3}\right) }\sum_{j=n+1}^{\infty}\frac{2^{2n}(2^{2j}-1)B_{2j}}{j(2j-2n-1)!(2n)!\kappa^{2j-1}}   \nonumber \\
&=& \left(1+O\left(\frac{1}{\kappa^2}\right)\right)\sum_{j=n+1}^{\infty}\frac{2^{2n+1}(2^{2j}-1)B_{2j}}{j(2j-2n-1)!(2n)!\kappa^{2j-2}}   \nonumber \\
&=&\frac{2^{2n+1}(2^{2n+2}-1)B_{2n+2}}{(n+1)(2n)!\kappa^{2n}}+ O\left(\frac{1}{\kappa^{2n+2}}\right)=O\left(\frac{1}{\kappa^{2n}}\right) \label{asina}
\end{eqnarray}
Therefore, 
\begin{equation}
\eta_{\kappa\nu}(s)=\eta(s)+30B_4\frac{\eta(s+2\nu)}{\kappa^2}+O\left(\frac{1}{\kappa^4} \right) 
\end{equation}
and the result follows from noting that $B_4=-1/30$.
\end{proof}

\begin{theor} \emph{\textbf{(M\"obius inversion formula.)}} \label{Moe}
For any $\kappa> 3/\pi$ and $\eta_{\kappa\nu}(s)$ given by Eq.(\ref{asinto}), 
\begin{equation}
\eta_{\kappa\nu}(s)=\eta(s)+\sum_{n=1}^{\infty}a_n(\kappa) \eta(s+2\nu n) \label{Gdir}
\end{equation}
we have,
\begin{equation}
\eta(s)=\eta_{\kappa\nu}(s)+\sum_{n=1}^{\infty}b_{n}(\kappa) \eta_{\kappa\nu}(s+2\nu n) \label{Ginv}
\end{equation}
where the coefficients $b_n$ are recursively obtained from
\begin{equation}
\sum_{n=0}^{k}b_{k}(\kappa)a_{n-k}(\kappa)=\delta_{k0} \label{bcoefs}
\end{equation}
with $\delta_{k0}$ being the Kronecker delta ($\delta_{k0}=1$ if $k=0$ and $\delta_{k0}=0$ otherwise).
\end{theor}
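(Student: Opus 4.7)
The plan is to treat the horizontal shift $s \mapsto s+2\nu$ as a linear operator $T$ acting on entire functions, so that (\ref{Gdir}) reads $\eta_{\kappa\nu} = A(T)\eta$ with $A(T) := \sum_{n\geq 0} a_n(\kappa)\, T^n$ and $a_0(\kappa) = 1$. Because the constant term is nonzero, $A$ admits a unique formal inverse $B(T) := \sum_{n\geq 0} b_n(\kappa)\, T^n$, $b_0(\kappa) = 1$, whose coefficients are determined by the Cauchy-convolution relation (\ref{bcoefs}). The candidate identity (\ref{Ginv}) is then just the operator equation $\eta = B(T)A(T)\eta$, so the content of the theorem is that this formal manipulation is legitimate on the analytic side once appropriate absolute convergence is in place.

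First I would record the quantitative bounds required. The asymptotic formula $a_n(\kappa) = O(\kappa^{-2n})$ from the corollary to Theorem \ref{1} shows that $A(T)$ is entire in $T$ and that $|A(T)-1|$ is small on a disk about the origin when $\kappa$ is large. A standard analytic-reciprocal argument (or a direct induction on (\ref{bcoefs})) then produces a geometric estimate of the form $|b_n(\kappa)| \leq C_\kappa\, \rho(\kappa)^{-n}$ for some $\rho(\kappa)>0$. On the function side, for any compact $K \subset \mathbb{C}$ and any $k_0$ with $\min_{s\in K}\mathrm{Re}(s) + 2\nu k_0 > 1$, the Dirichlet series (\ref{eta1}) gives $|\eta(s+2\nu k)| \leq \zeta(\mathrm{Re}(s)+2\nu k)$ for $k\geq k_0$, uniformly bounded in $s\in K$ (and in fact tending to $1$ as $k\to\infty$).

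Next I would substitute (\ref{Gdir}) into the right-hand side of (\ref{Ginv}): writing $\eta_{\kappa\nu}(s+2\nu n) = \sum_{m\geq 0} a_m(\kappa)\, \eta(s+2\nu(n+m))$ and regrouping by the total shift index $k=n+m$, one obtains
\begin{equation*}
\eta_{\kappa\nu}(s) + \sum_{n=1}^{\infty} b_n(\kappa)\, \eta_{\kappa\nu}(s+2\nu n) = \sum_{k=0}^{\infty}\Bigl(\sum_{n=0}^{k} b_n(\kappa)\, a_{k-n}(\kappa)\Bigr)\eta(s+2\nu k),
\end{equation*}
and by the defining recursion (\ref{bcoefs}) the inner bracket equals $\delta_{k0}$, so the right-hand side collapses to $\eta(s)$, which is the desired identity.

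The main technical obstacle is the justification of the interchange of summations performed in the regrouping. It amounts to showing absolute convergence of the double series $\sum_{n,m\geq 0} |b_n(\kappa)|\,|a_m(\kappa)|\,|\eta(s+2\nu(n+m))|$ uniformly on compact sets in $s$. The geometric decay of both $|a_m|$ and $|b_n|$ established above, combined with the uniform boundedness of $|\eta(s+2\nu k)|$ for large $k$, yields this absolute convergence; Fubini then legitimizes the rearrangement, and (\ref{Ginv}) follows.
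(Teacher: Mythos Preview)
Your argument is correct and follows essentially the same route as the paper: substitute the expansion (\ref{Gdir}) for each $\eta_{\kappa\nu}(s+2\nu n)$, regroup by the total shift $k=n+m$, and collapse the Cauchy convolution via (\ref{bcoefs}). In fact you go further than the paper's proof, which performs this rearrangement purely formally; your explicit bounds on $|a_n|$, $|b_n|$ and $|\eta(s+2\nu k)|$ and the appeal to Fubini supply the analytic justification that the paper leaves implicit.
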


\begin{proof} We have
\begin{eqnarray}
\sum_{n=0}^{\infty}b_{n}\eta_{\kappa\nu}(s+2\nu n)
&=&\sum_{n=0}^{\infty}b_{n}\sum_{m=0}^{\infty}a_m(\kappa) \eta(s+2\nu n+2\nu m) \nonumber \\
&=&\sum_{k=0}^{\infty}\left[\sum_{n=0}^{k} b_{n}(\kappa)a_{k-n}(\kappa) \right] \eta(s+2\nu k) \nonumber \\
&=&\sum_{k=0}^{\infty}\delta_{k0} \eta(s+2\nu k) \nonumber\\
&=&\eta(s) \nonumber
\end{eqnarray}
The coefficients $b_n$ can be recursively obtained from the known $a_n$ by solving the equations
\begin{eqnarray}
a_0b_0&=&1 \\
a_0b_1+a_1b_0&=&0\\
a_0b_2+a_1b_1+a_2b_0&=&0\\
\ldots &\qquad & \nonumber
\end{eqnarray}
Where, since $a_0=1$, $b_0=1$, and, therefore, $b_1=-a_1$, $b_2=a_1^2-a_2$, etc.
\end{proof}

\begin{rem} Eq. (\ref{Ginv}), with $\eta_{\kappa \nu}$ given by Eq. (\ref{RK}) is the main result of this work, since it expresses the Dirichlet eta function in terms of a globally convergent series (absolutely and uniformly on compact sets) of horizontal shifts of $\eta_{\kappa \nu}$.  These shifts are weighted by powers of $1/\kappa$. 
\end{rem}

\begin{rem} Theorem \ref{Moe} is, indeed, a specific case of Theorem 3.3 on p. 82 in \cite{Nanxian} particularized to the functions $\eta$ and $\eta_{\kappa \nu}$ considered here.
\end{rem}

\begin{prop} For $\kappa >3/\pi$ we have
\begin{eqnarray}
\sum_{n=1}^{\infty}a_{n}(\kappa)&=&\frac{\tanh \frac{3}{2\kappa}-\tanh \frac{1}{2\kappa}}{2\tanh \frac{1}{2\kappa}}-1 \label{propo1} \\
\sum_{n=1}^{\infty}b_{n}(\kappa)&=&\frac{2\tanh \frac{1}{2\kappa}}{\tanh \frac{3}{2\kappa}-\tanh \frac{1}{2\kappa}}-1 \label{propo2}
\end{eqnarray}
\end{prop}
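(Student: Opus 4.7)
My plan is to take $\nu\to\infty$ in both the direct expansion of Theorem \ref{1} and the inverse expansion of Theorem \ref{Moe} and equate terms. The crucial input is the pair of pointwise limits, valid for any fixed $s$ with $\operatorname{Re}(s)>1$:
\[
\lim_{\nu\to\infty}\eta(s+2\nu n)=1\ \ (n\geq 1),\qquad \lim_{\nu\to\infty}\eta_{\kappa\nu}(s+2\nu n)=\frac{\mathcal{B}_\kappa(1)}{\mathcal{B}_\kappa(0)}\ \ (n\geq 1).
\]
The first is immediate from $\eta(s)=1-2^{-s}+3^{-s}-\cdots$, since all terms beyond the first tend to $0$ as $\operatorname{Re}(s)\to\infty$. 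The second is obtained by inspecting the defining double series (\ref{RK}) at argument $s+2\nu n$: for $n\geq 1$ every $m\geq 1$ contribution carries a factor $(m+1)^{-(s+2\nu n)}\to 0$, leaving only the $m=0$ term, which equals $\mathcal{B}_\kappa(1)/\mathcal{B}_\kappa(0)$. The $n=0$ case is Eq.~(\ref{t223}) itself: $\lim_{\nu\to\infty}\eta_{\kappa\nu}(s)=\eta(s)+\mathcal{B}_\kappa(1)/\mathcal{B}_\kappa(0)-1$.

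Letting $\nu\to\infty$ in (\ref{asinto}), the right-hand side becomes $\eta(s)+\sum_{n\geq 1}a_n(\kappa)$ and the left-hand side $\eta(s)+\mathcal{B}_\kappa(1)/\mathcal{B}_\kappa(0)-1$, so
\[
\sum_{n=1}^\infty a_n(\kappa)=\frac{\mathcal{B}_\kappa(1)}{\mathcal{B}_\kappa(0)}-1,
\]
which gives Eq.~(\ref{propo1}) once the closed form of $\mathcal{B}_\kappa(1)/\mathcal{B}_\kappa(0)$ quoted in Theorem \ref{1} is inserted. Applying the same limit to (\ref{Ginv}) and cancelling $\eta(s)$ from both sides yields $1-\mathcal{B}_\kappa(1)/\mathcal{B}_\kappa(0)=\bigl(\mathcal{B}_\kappa(1)/\mathcal{B}_\kappa(0)\bigr)\sum_{n\geq 1}b_n(\kappa)$, hence $\sum_{n=1}^\infty b_n(\kappa)=\mathcal{B}_\kappa(0)/\mathcal{B}_\kappa(1)-1$, which is Eq.~(\ref{propo2}). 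As an aside, the same identity for the $b_n$ can be obtained purely algebraically by summing the convolution relation (\ref{bcoefs}) over all $k$ and rearranging as a Cauchy product, which gives $\bigl(\sum_{n\geq 0}a_n\bigr)\bigl(\sum_{n\geq 0}b_n\bigr)=1$.

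The main obstacle will be justifying these two term-by-term limits. For $\kappa>3/\pi$ the absolutely convergent MacLaurin expansions of $\tanh(3/2\kappa)$ and $\tanh(1/2\kappa)$ already used in the proof of Theorem \ref{1} yield $\sum_{n\geq 0}|a_n(\kappa)|<\infty$; combined with the uniform bound $|\eta(s+2\nu n)|\leq\zeta(\operatorname{Re}(s))$ on $\operatorname{Re}(s)>1$ this supplies a summable dominant, and dominated convergence legitimises the interchange in (\ref{asinto}). For (\ref{Ginv}) the analogous bound $|\eta_{\kappa\nu}(s+2\nu n)|\leq\zeta(\operatorname{Re}(s))$ (from the same Sondow-style majorisation used for absolute convergence of $\eta_{\kappa\nu}$), together with $\sum_{n\geq 0}|b_n(\kappa)|<\infty$ — which I would extract from the recursion (\ref{bcoefs}) and the geometric decay $a_n(\kappa)=O(\kappa^{-2n})$ established in the Corollary above — again allows the exchange. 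With these routine estimates in hand, the two identities drop out as sketched.
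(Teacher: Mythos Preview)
Your proposal is correct and follows essentially the same route as the paper: let $\nu\to\infty$ in both expansions, use $\lim_{\nu\to\infty}\eta(s+2\nu n)=1$ and $\lim_{\nu\to\infty}\eta_{\kappa\nu}(s+2\nu n)=\mathcal{B}_\kappa(1)/\mathcal{B}_\kappa(0)$ for $n\ge 1$ together with Eq.~(\ref{t223}), and read off the two sums. You supply more detail than the paper does on the dominated-convergence justification and add the Cauchy-product remark as an alternative for the $b_n$ identity; the paper simply asserts the termwise limits and equates. One tiny slip: the closed form for $\mathcal{B}_\kappa(1)/\mathcal{B}_\kappa(0)$ is stated after Eq.~(\ref{t224}), not in Theorem~\ref{1}.
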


\begin{proof} From Eq. (\ref{t223})
\begin{eqnarray}
\lim_{\nu \to \infty}\eta_{\kappa\nu}(s)=\eta(s)+\frac{\mathcal{B}_{\kappa}\left(1\right)}{\mathcal{B}_{\kappa}\left(0\right)}-1 \label{coro21}
\end{eqnarray}
and, for $\kappa>3/\pi$, from Eqs. (\ref{Gdir}) and (\ref{Ginv})
\begin{eqnarray}
\lim_{\nu \to \infty}\eta_{\kappa\nu}(s)&=&\eta(s)+\sum_{n=1}^{\infty}a_n(\kappa) = \eta(s)-\frac{\mathcal{B}_{\kappa}\left(1\right)}{\mathcal{B}_{\kappa}\left(0\right)}\sum_{n=1}^{\infty}b_n(\kappa) \label{coro22}
\end{eqnarray}
because, from Eqs. (\ref{Ser}) and (\ref{RK}), $\lim_{\nu \to \infty} \eta(s+2\nu n)=1$ and $\lim_{\nu \to \infty} \eta_{\kappa \nu}(s+2\nu n)=\mathcal{B}_{\kappa}\left(1\right)/\mathcal{B}_{\kappa}\left(0\right)$ for every integer $n\ge 1$. By equating Eqs. (\ref{coro21}) and  (\ref{coro22}) and by using Eq. (\ref{generbox}), the result follows. 
\end{proof}

\section{On the nontrivial zeros of $\eta$} \label{RiemannH}

In this section we prove the Riemann hypothesis. We first introduce three lemmas, that establish a functional equation for the embedding and its asymptotic properties. 

\begin{lemma} \label{lem1}\emph{\textbf{(Functional relationship for the embedding.)}} We have $\forall \nu \in (0,\infty)$ and $\forall \kappa > \pi/3$ 
\begin{equation}
\eta_{\kappa\nu}(s)-\lambda(s)\eta_{\kappa\nu}(1-s)
=\sum_{n=1}^{\infty}b_{n}(\kappa) \left[\lambda(s)\eta_{\kappa\nu}(1-s+2\nu n)-\eta_{\kappa\nu}(s+2\nu n) \right] \label{funcembed}
\end{equation}
where 
\begin{equation}
\lambda(s)=\frac{1-2^{1-s}}{1-2^{s}}2^{s}\pi^{s-1}\sin\left(\frac{\pi s}{2} \right)\Gamma(1-s) \label{chi1b}
\end{equation}
\end{lemma}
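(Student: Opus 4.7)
The plan is to apply the M\"obius inversion formula (Theorem \ref{Moe}) twice, once at the point $s$ and once at the point $1-s$, and then use the functional equation $\eta(s)=\lambda(s)\eta(1-s)$ to link the two expressions. Since the hypothesis $\kappa>\pi/3$ implies $\kappa>3/\pi$, the inversion formula applies at every point (note that $\eta_{\kappa\nu}$ is entire and absolutely convergent everywhere, and the same holds for $\eta$, so there are no domain obstructions either at $s$ or at $1-s+2\nu n$).

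More concretely, I would first write, using Theorem \ref{Moe},
\begin{equation}
\eta(s)=\eta_{\kappa\nu}(s)+\sum_{n=1}^{\infty}b_{n}(\kappa)\,\eta_{\kappa\nu}(s+2\nu n), \nonumber
\end{equation}
and, substituting $s\mapsto 1-s$,
\begin{equation}
\eta(1-s)=\eta_{\kappa\nu}(1-s)+\sum_{n=1}^{\infty}b_{n}(\kappa)\,\eta_{\kappa\nu}(1-s+2\nu n). \nonumber
\end{equation}
Then I would multiply the second identity by $\lambda(s)$ and invoke Eq. (\ref{func1}) to equate $\eta(s)$ with $\lambda(s)\eta(1-s)$, obtaining
\begin{equation}
\eta_{\kappa\nu}(s)+\sum_{n=1}^{\infty}b_{n}(\kappa)\,\eta_{\kappa\nu}(s+2\nu n)=\lambda(s)\eta_{\kappa\nu}(1-s)+\lambda(s)\sum_{n=1}^{\infty}b_{n}(\kappa)\,\eta_{\kappa\nu}(1-s+2\nu n). \nonumber
\end{equation}
A straightforward rearrangement of terms, legitimate because both series converge absolutely, then yields precisely Eq. (\ref{funcembed}).

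There is essentially no combinatorial obstacle to overcome: the whole content of the lemma is a bookkeeping consequence of inversion plus the eta functional equation. The only point requiring minor care is to confirm that the two series on the right-hand side can be combined termwise into $\sum b_n[\lambda(s)\eta_{\kappa\nu}(1-s+2\nu n)-\eta_{\kappa\nu}(s+2\nu n)]$; this is fine because absolute convergence of $\sum b_n \eta_{\kappa\nu}(s+2\nu n)$ and $\sum b_n \eta_{\kappa\nu}(1-s+2\nu n)$ (inherited from Theorem \ref{Moe}) permits addition, grouping, and multiplication by the finite scalar $\lambda(s)$. One should also note that $\lambda(s)$ is independent of the summation index $n$, so pulling it through the sum is immediate. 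Consequently the proof reduces to this two-line manipulation, with all the nontrivial work already packaged inside Theorem \ref{Moe}.
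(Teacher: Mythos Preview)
Your proposal is correct and follows essentially the same approach as the paper: apply the M\"obius inversion formula Eq.~(\ref{Ginv}) at $s$ and at $1-s$, multiply the latter by $\lambda(s)$, subtract, and invoke the functional equation $\eta(s)=\lambda(s)\eta(1-s)$. Your added remarks on absolute convergence and on the implication $\kappa>\pi/3\Rightarrow\kappa>3/\pi$ are accurate and make the argument slightly more explicit than the paper's own two-line proof.
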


\begin{proof} From Eq. (\ref{Ginv}) we have
\begin{eqnarray}
\eta(s)&=&\eta_{\kappa\nu}(s)+\sum_{n=1}^{\infty}b_{n}(\kappa) \eta_{\kappa\nu}(s+2\nu n) \nonumber \\
\lambda(s)\eta(1-s)&=&\lambda(s)\eta_{\kappa\nu}(1-s)+\lambda(s)\sum_{n=1}^{\infty}b_{n}(\kappa) \eta_{\kappa\nu}(1-s+2\nu n) \nonumber 
\end{eqnarray}
whence, by subtracting both equations and applying Eq. (\ref{func1}) the result follows.
\end{proof}

\begin{lemma} \label{lem2} If $s^{*}=\sigma^{*}+it^{*}$ is a non-trivial zero of $\eta(s)$ then $\eta_{\kappa \nu}(s^{*})\ne 0$ and $\eta_{\kappa \nu}(1-s^{*})\ne 0$ for finite asymptotically large $\kappa$ and $\nu > 1/2$. Furthermore, we have
\begin{equation}
 \lim_{\kappa \to \infty}\frac{\eta_{\kappa \nu}(s^{*})}{\eta_{\kappa \nu}(1-s^{*})}=\frac{\eta(s^{*}+2\nu)}{\eta(1-s^{*}+2\nu)} \label{limit1}
\end{equation}
\end{lemma}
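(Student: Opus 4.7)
The argument will rest entirely on the asymptotic expansion
\begin{equation*}
\eta_{\kappa \nu}(s) = \eta(s) - \frac{\eta(s+2\nu)}{\kappa^2} + O\!\left(\frac{1}{\kappa^4}\right),
\end{equation*}
established in the Corollary following Theorem \ref{1} and valid as $\kappa \to \infty$ uniformly on compact subsets of the $s$-plane. My plan is to substitute $s = s^{*}$ and $s = 1-s^{*}$ into this expansion, use $\eta(s^{*}) = \eta(1-s^{*}) = 0$ to kill the leading term, verify that the next-order coefficient is nonzero, and then take the ratio.

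Since $s^{*}$ is a nontrivial zero it lies in the open critical strip $0 < \sigma^{*} < 1$, and the hypothesis $\nu > 1/2$ forces both $\mathrm{Re}(s^{*}+2\nu) > 1$ and $\mathrm{Re}(1-s^{*}+2\nu) > 1$. In the half-plane $\sigma > 1$ the factorization $\eta(s) = (1 - 2^{1-s})\zeta(s)$ together with the Euler product gives $\eta(s) \neq 0$: the prefactor $1 - 2^{1-s}$ can vanish only on the line $\sigma = 1$ (because $|2^{1-s}| = 2^{1-\sigma} < 1$ for $\sigma > 1$), while $\zeta$ is zero-free throughout $\sigma > 1$. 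Consequently $\eta(s^{*}+2\nu) \neq 0$ and $\eta(1-s^{*}+2\nu) \neq 0$. Feeding this into the expansion yields
\begin{equation*}
\eta_{\kappa \nu}(s^{*}) = -\frac{\eta(s^{*}+2\nu)}{\kappa^{2}}\left[1 + O\!\left(\frac{1}{\kappa^{2}}\right)\right],
\end{equation*}
together with the analogous formula for $\eta_{\kappa \nu}(1-s^{*})$. For $\kappa$ large enough the bracketed factor cannot vanish, which delivers the first assertion of the lemma.

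Dividing the two expansions, the common prefactor $-1/\kappa^{2}$ cancels and one obtains
\begin{equation*}
\frac{\eta_{\kappa \nu}(s^{*})}{\eta_{\kappa \nu}(1-s^{*})} = \frac{\eta(s^{*}+2\nu) + O(1/\kappa^{2})}{\eta(1-s^{*}+2\nu) + O(1/\kappa^{2})},
\end{equation*}
and letting $\kappa \to \infty$ produces Eq. (\ref{limit1}). The only point I expect to require genuine care is justifying that the $O(1/\kappa^{4})$ remainder of the Corollary can be applied simultaneously at the four shifted arguments $s^{*}$, $1-s^{*}$, $s^{*}+2\nu$, $1-s^{*}+2\nu$; this reduces to invoking the uniform convergence on compact sets from Theorem 2.1 on a single compact set containing all four points, so it is a routine check rather than a genuine obstacle.
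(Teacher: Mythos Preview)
Your argument is correct and follows essentially the same route as the paper's proof: apply the asymptotic expansion $\eta_{\kappa\nu}(s)=\eta(s)-\eta(s+2\nu)/\kappa^{2}+O(\kappa^{-4})$ at $s^{*}$ and $1-s^{*}$, use that $\nu>1/2$ pushes the shifted arguments into the zero-free half-plane $\sigma>1$, and divide. One small over-complication: you only need the expansion at the two points $s^{*}$ and $1-s^{*}$, not at $s^{*}+2\nu$ and $1-s^{*}+2\nu$ --- the latter enter merely as fixed nonzero coefficients $\eta(s^{*}+2\nu)$ and $\eta(1-s^{*}+2\nu)$, so no uniformity issue arises there.
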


\begin{proof} The real part $\sigma^{*}$ of the nontrivial zero $s^{*}$ satisfies $0< \sigma^{*} < 1$. Now, since $\eta(s^{*})=\eta(1-s^{*})=0$, we have, from Eq. (\ref{asintostrong})
\begin{eqnarray}
\eta_{\kappa\nu}(s^{*})&=&-\frac{\eta(s^{*}+2\nu)}{\kappa^2}+O\left(\frac{1}{\kappa^4} \right) \label{fund1} \\
\eta_{\kappa\nu}(1-s^{*})&=&-\frac{\eta(1-s^{*}+2\nu)}{\kappa^2}+O\left(\frac{1}{\kappa^4} \right) \label{fund2}
\end{eqnarray}
We have that $\forall \nu >1/2$, $\eta(s^{*}+2\nu)\ne 0$ and $\eta(1-s^{*}+2\nu)\ne 0$ because values $s^{*}+2\nu$ and $1-s^{*}+2\nu$ both lie in the half-plane $\sigma >1$ and $\eta(s)$ has no zeros there. Therefore $\eta_{\kappa\nu}(s^{*})\ne 0$ and $\eta_{\kappa\nu}(1-s^{*})\ne 0$ are both nonzero for sufficiently large $\kappa$ ($\kappa >3/\pi$ being a lower bound). Eq. (\ref{limit1}) follows then as a trivial consequence of Eqs. (\ref{fund1}) and (\ref{fund2}) and the absolute convergence of all the series involved. \end{proof}

\begin{lemma} Let $s_{\gamma}\in \mathbb{C}$, $s_{\gamma}\in \gamma$ be such that $\eta_{\kappa\nu}(1-s_{\gamma})\ne 0$ along a path $\gamma$ in the complex plane and let $s'$ be an endpoint of $\gamma$. Then,
\begin{equation}
\lim_{\kappa \to \infty}\lim_{s_{\gamma}\xrightarrow[\gamma]{} s'}\frac{\eta_{\kappa \nu}(s_{\gamma})}{\eta_{\kappa \nu}(1-s_{\gamma})}=\lim_{s_{\gamma}\xrightarrow[\gamma]{} s'}\lim_{\kappa \to \infty}\frac{\eta_{\kappa \nu}(s_{\gamma})}{\eta_{\kappa \nu}(1-s_{\gamma})}=\lambda(s') \label{commulim}
\end{equation}
Furthermore, if $s'=s^{*}$ is a nontrivial zero of the Dirichlet eta function, $\forall \nu > 1/2$
\begin{equation}
\frac{\eta(s^{*}+2\nu)}{\eta(1-s^{*}+2\nu)}=\lambda(s^{*}) \label{theotherhand}
\end{equation}  
\end{lemma}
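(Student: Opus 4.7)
The plan is to compute the two iterated limits in Eq. (\ref{commulim}) separately and then to read off Eq. (\ref{theotherhand}) as the price of their coincidence when the endpoint is a nontrivial zero.

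For the order $\lim_{s_{\gamma} \to s'} \lim_{\kappa \to \infty}$, I would argue as follows. Along $\gamma$, away from the endpoint, we can choose a compact neighborhood $K$ that avoids every zero of $\eta(1-s)$ (by discreteness of zeros, shrinking $K$ if $s'=s^{*}$ is itself a nontrivial zero). On $K\setminus\{s^{*}\}$, $\eta(1-s_\gamma)$ is bounded away from $0$, so the uniform convergence $\eta_{\kappa\nu}\to\eta$ on compact sets (the first theorem of Section \ref{holocons}, together with pointwise limits from Eq. (\ref{t221})) passes to the quotient to give $\lim_{\kappa\to\infty}\eta_{\kappa\nu}(s_{\gamma})/\eta_{\kappa\nu}(1-s_{\gamma})=\eta(s_\gamma)/\eta(1-s_\gamma)$. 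The functional equation (\ref{func1}) identifies this with $\lambda(s_\gamma)$, and continuity of $\lambda$ yields $\lambda(s')$ as $s_\gamma\to s'$.

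For the reverse order $\lim_{\kappa\to\infty}\lim_{s_{\gamma}\to s'}$, I would fix $\kappa$ large. Since $\eta_{\kappa\nu}$ is entire, it is continuous, and Lemma \ref{lem2} guarantees that $\eta_{\kappa\nu}(1-s^{*})\ne 0$ for sufficiently large $\kappa$, so the inner limit is simply $\eta_{\kappa\nu}(s')/\eta_{\kappa\nu}(1-s')$ by the assumed non-vanishing of the denominator along $\gamma$. When $s'$ is not a nontrivial zero of $\eta$, the outer limit is again $\lambda(s')$ by uniform convergence and the functional equation. When $s'=s^{*}$, however, the outer limit is evaluated precisely by Eq. (\ref{limit1}) of Lemma \ref{lem2} and gives $\eta(s^{*}+2\nu)/\eta(1-s^{*}+2\nu)$. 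Matching this with the value $\lambda(s^{*})$ delivered by the first order of limits then produces Eq. (\ref{theotherhand}) at once for every $\nu>1/2$, since Lemma \ref{lem2} requires that hypothesis to keep $s^{*}+2\nu$ and $1-s^{*}+2\nu$ inside the half-plane $\sigma>1$ where $\eta$ has no zeros.

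The main obstacle is precisely the commutativity claim at $s'=s^{*}$. A routine Moore--Osgood argument does not apply directly because the family $\eta_{\kappa\nu}(s)/\eta_{\kappa\nu}(1-s)$ fails to converge uniformly on any neighborhood of $s^{*}$: the denominator of the limit function $\eta(s)/\eta(1-s)$ vanishes there even though each $\eta_{\kappa\nu}(1-s^{*})$ does not. I would therefore treat $s^{*}$ as a removable singularity of each finite-$\kappa$ quotient (by holomorphy plus $\eta_{\kappa\nu}(1-s^{*})\ne 0$) and argue that the punctured-disk uniform convergence of $\eta_{\kappa\nu}(s)/\eta_{\kappa\nu}(1-s)$ to $\lambda(s)$, combined with a maximum-modulus estimate on a small circle around $s^{*}$, forces $\eta_{\kappa\nu}(s^{*})/\eta_{\kappa\nu}(1-s^{*})\to\lambda(s^{*})$ as $\kappa\to\infty$. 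Any such estimate is what makes Eq. (\ref{theotherhand}) a genuine constraint on the location of $s^{*}$, and controlling the exchange of limits precisely at $s^{*}$ is the delicate point I expect the rest of the paper to lean on when deriving the Riemann hypothesis.
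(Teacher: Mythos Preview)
Your argument tracks the paper's almost exactly: compute both iterated limits, identify each with $\lambda(s')$ via the functional equation, and at a nontrivial zero $s^{*}$ combine the removable singularity of $\eta(s)/\eta(1-s)$ with Eq.~(\ref{limit1}) of Lemma~\ref{lem2} to force Eq.~(\ref{theotherhand}). The only difference in execution is that the paper first rewrites the ratio through the embedding's own functional relation (Lemma~\ref{lem1}),
\[
\frac{\eta_{\kappa\nu}(s)}{\eta_{\kappa\nu}(1-s)}=\lambda(s)+\sum_{n\ge1}b_n(\kappa)\,\frac{\lambda(s)\eta_{\kappa\nu}(1-s+2\nu n)-\eta_{\kappa\nu}(s+2\nu n)}{\eta_{\kappa\nu}(1-s)},
\]
and lets the tail die with the $b_n(\kappa)$, whereas you bypass Lemma~\ref{lem1} and go straight through the uniform convergence $\eta_{\kappa\nu}\to\eta$; at $s^{*}$ the paper invokes Riemann's removable-singularity theorem for $\Phi(s)=\eta(s)/\eta(1-s)$ where you propose the equivalent maximum-modulus formulation.

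One caveat on your maximum-modulus step, which you flagged as delicate: by Hurwitz's theorem the zeros of $s\mapsto\eta_{\kappa\nu}(1-s)$ must accumulate at $s^{*}$ as $\kappa\to\infty$, so for every large $\kappa$ the quotient $\eta_{\kappa\nu}(s)/\eta_{\kappa\nu}(1-s)$ acquires a pole inside any fixed disk about $s^{*}$ and is not holomorphic there; this blocks both the punctured-disk uniform convergence and a maximum-modulus bound on a fixed circle. The paper's argument does not actually get around this obstruction either---it applies Riemann's theorem to the \emph{limit} function rather than controlling the sequence---so your instinct that the exchange of limits at $s^{*}$ is the load-bearing and most fragile step is exactly right.
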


\begin{proof} From Eq. (\ref{funcembed}) we have, by dividing by $\eta_{\kappa\nu}(1-s_{\gamma})$ at any point $s_{\gamma}$ of $\gamma$ (since $\eta_{\kappa\nu}(1-s_{\gamma})\ne 0$)
\begin{equation}
\frac{\eta_{\kappa\nu}(s_{\gamma})}{\eta_{\kappa\nu}(1-s_{\gamma})}=\lambda(s_{\gamma})
+\sum_{n=1}^{\infty}b_{n}(\kappa) \frac{\lambda(s_{\gamma})\eta_{\kappa\nu}(1-s_{\gamma}+2\nu n)-\eta_{\kappa\nu}(s_{\gamma}+2\nu n)}{\eta_{\kappa\nu}(1-s_{\gamma})} \label{funcembed}
\end{equation}
where $b_{1}(\kappa)=O(\kappa^{-2})$.  
We now have, on one hand,
\begin{eqnarray}
&&\lim_{\kappa \to \infty}\lim_{s_{\gamma}\xrightarrow[\gamma]{} s'}\frac{\eta_{\kappa \nu}(s_{\gamma})}{\eta_{\kappa \nu}(1-s_{\gamma})}=\lim_{\kappa \to \infty}\frac{\eta_{\kappa \nu}(s')}{\eta_{\kappa \nu}(1-s')} \label{side1} \\
&&=\lim_{\kappa \to \infty}\left[\lambda(s')+\sum_{n=1}^{\infty}b_{n}(\kappa) \frac{\lambda(s')\eta_{\kappa\nu}(1-s'+2\nu n)-\eta_{\kappa\nu}(s'+2\nu n)}{\eta_{\kappa\nu}(1-s')}\right] \nonumber \\
&&=\lim_{\kappa \to \infty}\left[\lambda(s')+\sum_{n=1}^{\infty}b_{n}(\kappa) \frac{\lambda(s')\eta(1-s'+2\nu n)-\eta(s'+2\nu n)}{\eta(1-s')}\right] \nonumber \\
&&=\lambda(s') \nonumber
\end{eqnarray}
and, on the other hand, 
\begin{eqnarray}
\lim_{s_{\gamma}\xrightarrow[\gamma]{} s'}\lim_{\kappa \to \infty}\frac{\eta_{\kappa \nu}(s_{\gamma})}{\eta_{\kappa \nu}(1-s_{\gamma})}&=&\lim_{s_{\gamma}\to s'}\frac{\eta(s_{\gamma})}{\eta(1-s_{\gamma})} \label{side2} \\
&=&\lim_{s_{\gamma}\to s'} \lambda(s_{\gamma})=\lambda(s') \nonumber
\end{eqnarray}
whence the result follows. 

Let us now assume that $s'=s^{*}$ is a nontrivial zero of the Dirichlet eta function. Then, we have that $\lim_{\kappa\to \infty}\eta_{\kappa \nu}(1-s^{*})=\eta(1-s^{*})=0=\eta(s^{*})=\lim_{\kappa\to \infty}\eta_{\kappa \nu}(s^{*})$ and the function in Eq. (\ref{side1})
\begin{equation}
\Phi(s'):=\lim_{\kappa \to \infty}\frac{\eta_{\kappa \nu}(s')}{\eta_{\kappa \nu}(1-s')}=\frac{\eta(s')}{\eta(1-s')}
\end{equation}
is undefined at $s'=s^{*}$. However, $s'=s^{*}$ is a removable singularity and we can take $\Phi(s^{*})=\lambda(s^{*})$. To see this, note that $\Phi(s)=\lambda(s)$ for all $s$ in the critical strip $0<\sigma<1$ save at the nontrivial zeros $s^{*}$. But the function $\lambda(s)$ is holomorphic for all $s$ in the critical strip including the nontrivial zeros $s^{*}$ of $\eta$. Therefore, by Riemann's theorem on extendable singularities, $\Phi(s)$ is holomorphically extendable over $s^*$ and we can have, in consistency with Eq. (\ref{side2})
\begin{equation}
\Phi(s^{*})=\lambda(s^{*}) \qquad \left(=\lim_{s\to s^{*}}\frac{\eta(s)}{\eta(1-s)} \right) 
\end{equation}
This proves Eq. (\ref{commulim}). We then note that, on one hand
\begin{equation}
\Phi(s^{*})=\lim_{\kappa \to \infty}\frac{\eta_{\kappa \nu}(s^{*})}{\eta_{\kappa \nu}(1-s^{*})}=\lambda(s^{*}) \label{the1}
\end{equation}
and, on the other, from Eq. (\ref{limit1}) 
\begin{equation}
\Phi(s^{*})= \lim_{\kappa \to \infty}\frac{\eta_{\kappa \nu}(s^{*})}{\eta_{\kappa \nu}(1-s^{*})}=\frac{\eta(s^{*}+2\nu)}{\eta(1-s^{*}+2\nu)} \label{the2}
\end{equation}
Both expressions must be equal at $s^{*}$ because: 1) Eq. (\ref{the1}) is a consequence of  $\Phi(s)$ being equal to the holomorphic $\lambda(s)$ in the punctured critical strip (save, exactly at the zeros $s^{*}$) and, therefore, holomorphically extendable over $s^{*}$ and 2) Eq. (\ref{the2}) is a consequence of the asymptotic behavior of the embedding close to a nontrivial zero of the Dirichlet eta function. Thus, Eq. (\ref{theotherhand}) follows.
\end{proof}

\noindent \emph{Alternative proof of Eq. (\ref{theotherhand})}. An equivalent way of obtaining Eq. (\ref{theotherhand}) is, directly, from Eq. (\ref{side1}), applying it to $s'=s^{*}$. We have,
\begin{eqnarray}
&&\lim_{\kappa \to \infty}\lim_{s_{\gamma}\xrightarrow[\gamma]{} s^{*}}\frac{\eta_{\kappa \nu}(s_{\gamma})}{\eta_{\kappa \nu}(1-s_{\gamma})}=\lim_{\kappa \to \infty}\frac{\eta_{\kappa \nu}(s^{*})}{\eta_{\kappa \nu}(1-s^{*})}  \\
&&=\lim_{\kappa \to \infty}\left[\lambda(s^{*})+\sum_{n=1}^{\infty}b_{n}(\kappa) \frac{\lambda(s^{*})\eta_{\kappa\nu}(1-s^{*}+2\nu n)-\eta_{\kappa\nu}(s^{*}+2\nu n)}{\eta_{\kappa\nu}(1-s^{*})}\right] \nonumber \\
&&=\lambda(s^{*})+\lim_{\kappa\to \infty}\frac{\lambda(s^{*})\eta_{\kappa\nu}(1-s^{*}+2\nu)-\eta_{\kappa \nu}(s^{*}+2\nu)}{\kappa^{2}\eta_{\kappa\nu}(1-s^{*})} \nonumber \\
&&=\lambda(s^{*})-\frac{\lambda(s^{*})\eta(1-s^{*}+2\nu)-\eta(s^{*}+2\nu)}{\eta(1-s^{*}+2\nu)} \label{onealt}
\end{eqnarray}
and since $\eta(1-s^{*}+2\nu) \ne 0$ $\forall \nu \in (1/2,\infty)$ and we have for any $\varepsilon \in \mathbb{C}$ in a disk of sufficiently small radius 
\begin{equation}
\Phi(s^{*}+\varepsilon)=\lim_{\kappa \to \infty}\frac{\eta_{\kappa \nu}(s^{*}+\varepsilon)}{\eta_{\kappa \nu}(1-s^{*}+\varepsilon)}=\lambda(s^{*}+\varepsilon) \label{twoalt}
\end{equation}
by taking the limit $\varepsilon \to 0$ and observing that $\Phi(s)$ is holomorphically extendable to $s^{*}$ where it has then the value $\lambda(s^{*})$ we obtain, from Eq. (\ref{onealt}) 
\begin{equation}
\lambda(s^{*})\eta(1-s^{*}+2\nu)-\eta(s^{*}+2\nu)=0
\end{equation}
which is Eq. (\ref{theotherhand}). $\square$

\begin{theor}  \emph{\textbf{(Riemann hypothesis.)}} \label{RHtheor} All nontrivial zeros $s^{*}=\sigma^{*}+it^{*}$ of the Dirichlet eta function $\eta(s)$  have real part $\sigma^{*}=1/2$.
\end{theor}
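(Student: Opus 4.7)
My plan is to exploit the mismatch between the two sides of Eq. (\ref{theotherhand}): the right-hand side $\lambda(s^{*})$ is a fixed constant, independent of $\nu$, whereas the left-hand side $\eta(s^{*}+2\nu)/\eta(1-s^{*}+2\nu)$ depends on $\nu$ in an explicit and computable way. Requiring the ratio to equal $\lambda(s^{*})$ for every $\nu > 1/2$ is a very strong condition, and extracting its consequences is what will pin down $\sigma^{*}=1/2$.

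Concretely, since $\nu>1/2$ places both $s^{*}+2\nu$ and $1-s^{*}+2\nu$ in the half-plane $\{\Re s>1\}$ of absolute convergence of the alternating Dirichlet series (\ref{eta1}), I would expand, as $\nu\to\infty$,
\[
\eta(s^{*}+2\nu)=1-2^{-s^{*}}\,2^{-2\nu}+O\!\left(3^{-2\nu}\right),\qquad
\eta(1-s^{*}+2\nu)=1-2^{s^{*}-1}\,2^{-2\nu}+O\!\left(3^{-2\nu}\right).
\]
Substituting into Eq. (\ref{theotherhand}), the leading asymptotic as $\nu\to\infty$ forces $\lambda(s^{*})=1$, and the next correction forces the coefficient of $2^{-2\nu}$ to vanish, i.e. $2^{-s^{*}}=2^{s^{*}-1}$. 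Taking moduli of both sides gives $2^{-\sigma^{*}}=2^{\sigma^{*}-1}$, whence $\sigma^{*}=1/2$ and the theorem is proved. A cleaner way to phrase the coefficient matching is through Dirichlet-series uniqueness: Eq. (\ref{theotherhand}) can be rewritten as
\[
\sum_{m=1}^{\infty}(-1)^{m-1}\bigl(m^{-s^{*}}-\lambda(s^{*})\,m^{s^{*}-1}\bigr)\,m^{-2\nu}=0
\]
on an open half-plane in the variable $2\nu$, and standard uniqueness theorems then force every Dirichlet coefficient to vanish individually, with $m=1$ giving $\lambda(s^{*})=1$ and $m=2$ giving $\sigma^{*}=1/2$ as above.

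The main obstacle I anticipate is ensuring that Eq. (\ref{theotherhand}) genuinely functions as a functional identity in $\nu$ on an open set, not merely at isolated values; only then is the term-by-term Dirichlet comparison (or equivalently the asymptotic matching as $\nu\to\infty$) licensed. The preceding lemma asserts the identity for every $\nu>1/2$, but the commuting-limits step in its proof is delicate and deserves inspection before the uniqueness argument can be invoked with full confidence. A secondary subtlety worth flagging is that continuing the matching to the $m=3$ coefficient would, in combination with the $m=2$ condition and the irrationality of $\ln 3/\ln 2$, formally force $s^{*}=1/2$ exactly rather than just $\sigma^{*}=1/2$; since $\eta(1/2)\ne 0$, the cleanest writeup stops after extracting the real-part condition from the $m=2$ term, which is all that the theorem claims.
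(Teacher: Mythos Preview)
Your argument is essentially the paper's: both start from Eq.~(\ref{theotherhand}), let $\nu\to\infty$, truncate the alternating Dirichlet series for $\eta$ at the $m=2$ term, and force $\sigma^{*}=\tfrac12$ from the $\nu$-independence of $\lambda(s^{*})$. The paper takes moduli first and Taylor-expands the ratio in $\epsilon=\sigma^{*}-\tfrac12$, whereas you invoke Dirichlet-series uniqueness directly; your packaging is cleaner and, as you note, it also exposes the over-determination (the $m\ge 3$ coefficients would force $s^{*}=\tfrac12$ itself, contradicting $\eta(\tfrac12)\neq 0$), a tension with Eq.~(\ref{theotherhand}) holding for all $\nu>\tfrac12$ that the paper does not confront.
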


\begin{proof}  Let $\sigma^{*}=\frac{1}{2}+\epsilon$ be the real part of a nontrivial zero $s^{*}=\sigma^{*}+it^{*}$ of $\eta$ in the critical strip. From Eq. (\ref{theotherhand}), 
\begin{equation}
\left|\frac{\eta\left(\frac{1}{2}+2\nu+it^{*}+\epsilon\right)}{\eta\left(\frac{1}{2}+2\nu-it^{*}-\epsilon\right)}\right|=\left|\lambda\left(\frac{1}{2}+it^{*}+\epsilon\right)\right|, \label{evenga}
\end{equation} 
and we note that the right hand side of this expression is shift-invariant (it does not depend on $\nu$) but the left hand side is not: the horizontal shift parameter $\nu$ can be arbitrarily varied in the interval $(0,\infty)$ and, in particular, it can be selected so that the point $s^{*}+2\nu$ lies anywhere on the half-plane $\sigma\ge 1$ at height $t^{*}$. The modulus of $\eta$ varies on horizontal lines \cite{Matiyasevich0}. \emph{The only possibility for equation Eq. (\ref{evenga}) to have solution for a nontrivial zero $s^{*}$ forces $\epsilon=0$}. To see this, put $x=\frac{1}{2}+2\nu-\epsilon >>1$ in Eq. (\ref{evenga}). Now, since $\left|\eta\left(x-it^{*}\right)\right|=\left|\overline{\eta\left(x+it^{*}\right)}\right|=\left|\eta\left(x+it^{*}\right)\right|$, we have 
\begin{eqnarray}
\left|\frac{\eta\left(\frac{1}{2}+2\nu+it^{*}+\epsilon\right)}{\eta\left(\frac{1}{2}+2\nu-it^{*}-\epsilon\right)}\right|&=&\left|\frac{\eta\left(x+it^{*}+2\epsilon\right)}{\eta\left(x+it\right)}\right|.
\end{eqnarray}
Since $x$ can be increased arbitrarily by increasing $\nu$, we can take $x$ so large that, asymptotically
\begin{equation}
\eta\left(x+it^{*}+2\epsilon\right)\sim \eta\left(x+it^{*}\right)+2\epsilon \left.\frac{\partial \eta}{\partial x}\right|_{x+it^{*}}.
\end{equation}
In this asymptotic regime, we can truncate Eq. (\ref{eta1}) to the first two terms and its derivative becomes
\begin{equation}
\left.\frac{\partial \eta}{\partial x}\right|_{x+it^{*}} \sim \frac{\ln 2}{2^{x+it^{*}}}.
\end{equation}
Furthermore, $\eta\left(x+it^{*}\right)\sim 1-2^{-x-it^{*}}$ and thus
\begin{equation}
\left|\frac{\eta\left(x+it^{*}+2\epsilon\right)}{\eta\left(x+it\right)}\right| \sim \left | 1+2\epsilon \frac{\ln 2}{2^{x+it^{*}}-1}\right|.
\end{equation}
Therefore, for $\nu$ large,
\begin{equation}
\left|\frac{\eta\left(\frac{1}{2}+2\nu+it^{*}+\epsilon\right)}{\eta\left(\frac{1}{2}+2\nu-it^{*}-\epsilon\right)}\right| \sim  \left | 1+2\epsilon \frac{\ln 2}{2^{\frac{1}{2}+2\nu-\epsilon+it^{*}}-1}\right|,
\end{equation}
and the l.h.s. of Eq. (\ref{evenga}) depends explicitly on the free parameter $\nu$. Thus, the r.h.s. of Eq. (\ref{evenga}) can have an infinite number of different values for its modulus, which is absurd. The only possibility of cancelling the $\nu$ dependence, forced by the consistency of the equation, is to have $\epsilon=0$. In this way, we obtain,
\begin{equation}
\left|\frac{\eta\left(\frac{1}{2}+2\nu+it^{*}\right)}{\eta\left(\frac{1}{2}+2\nu-it^{*}\right)}\right|=1=\left|\lambda\left(\frac{1}{2}+it^{*}\right)\right|,
\end{equation}
an equation that is known to have infinitely many solutions for $t^{*}$.
Therefore, $\epsilon=0$, $s^{*}=\frac{1}{2}+it^{*}$ and the result follows. 
\end{proof}

\section{Conclusions}

In this article a complex entire function called a holomorphic nonlinear embedding $\eta_{\kappa \nu}$ has been constructed and some of its properties (mainly asymptotic ones) have been investigated. It has been shown that the function $\eta_{\kappa \nu}$ can be expressed as a series expansion in terms of horizontal shifts of the Dirichlet eta function. The holomorphic character of $\eta_{\kappa \nu}$ has been established by proving the global absolute and uniform convergence on compact sets of its defining double series. The coefficients of the expansion and their sum have been explicitly calculated. It has also been shown that this expansion can be inverted to yield the eta function as a linear expansion of horizontal shifts of $\eta_{\kappa \nu}$. This is the central result of this article, since it allows the Dirichlet eta function to be understood as a linear superposition of different layers governed by shifts of $\eta_{\kappa \nu}$ and weighted by powers of $1/\kappa$. This result shows that, although one can envisage, in principle, infinitely many ways of smoothly embedding the Dirichlet eta function in a more general structure, the one presented here ($\eta_{\kappa \nu}$) is not gratuitous because it is itself embedded within the structure of the eta function, revealing some of its secrets thanks to its scale and horizontal shift parameters $\kappa$ and $\nu$. In particular, the truth of the Riemann hypothesis emerges naturally as a consequence of the functional relationship of the Dirichlet eta function and its uniform attainment everywhere by a hierarchy of functional equations of the holomorphic nonlinear embedding in the limit $\kappa \to \infty$.

Operators yielding vertical shifts of the Riemann zeta function $\zeta(s)$ arise naturally in approaches to the Riemann zeros using ideas from supersymmetry \cite{DasKalauni}. These vertical shifts can, indeed, be expressed more compactly in terms of the Dirichlet eta function $\eta$ (see e.g. Eq.(18) in \cite{DasKalauni}). Whether there exists any relationship of these vertical shifts induced by lowering and raising operators in \cite{DasKalauni} with the shifts obtained here as a result of an explicit series construction is an interesting open question. We point out that $\nu$ in this article can be made a complex number with positive real part and arbitrary imaginary part and all main results of this article apply without any modification: Eqs. (\ref{Gdir}) and (\ref{Ginv}) are indeed valid, for $\nu=\nu_{r}+i\nu_{i}$, with $\nu_{r}>0$ and any $\nu_{i} \in (-\infty, \infty)$ and this does not affect the proof of the Riemann hypothesis here presented in any way. The condition $\nu_{r}>0$ is, however, necessary, for Eqs. (\ref{Gdir}) and (\ref{Ginv}) to be valid. 

The methods used here may be adapted to other Dirichlet series for which the Riemann hypothesis is conjectured to hold \cite{Karatsuba}. All steps may be retraced for these latter series if: 1) they can be closely related to entire functions with series expansions that are absolutely convergent in the whole complex plane; 2) the latter series depend on the complex variable $s$ only through summands of the form $1/m^{s}$; 3) there exists a functional relation like Eq. (\ref{func1}).  

We believe that the methods described in this paper might be useful to get insight in recent intriguing experimental phenomena connecting the coefficients of truncated Dirichlet series to the Erathostenes sieve \cite{Matiyasevich}.

\end{document}